\newtheorem{theorem}{Theorem}[section]
\newtheorem{lemma}[theorem]{Lemma}
\newtheorem{corollary}[theorem]{Corollary}
\theoremstyle{definition}
\theoremstyle{remark}
\newtheorem{remark}[theorem]{Remark}
\numberwithin{equation}{section}
\newcommand{\F}{{\mathbb{F}_q}}
\newcommand{\PG}{\mathrm{PG}(2,q)}
\newcommand{\C}{\mathcal{C}}
\newcommand{\m}{(\mathrm{mod}\;4)}
\begin{document}

\title{On planar arcs of size $(q+3)/2$}

\author{G\"{u}lizar G\"{u}nay$^*$}
\address{G\"{u}lizar G\"{u}nay, Faculty of Engineering and Natural Sciences, Sabanc{\i} University, Tuzla, İstanbul 34956, Turkey}
\email{gunaygulizar@sabanciuniv.edu}

\author{Michel Lavrauw}
\address{Michel Lavrauw, Faculty of Engineering and Natural Sciences, Sabanc{\i} University, Tuzla, İstanbul 34956, Turkey}
\email{mlavrauw@sabanciuniv.edu}

\thanks{The authors were supported by {\em The Scientific and Technological Research Council of Turkey}, T\"UB\.{I}TAK (project no. 118F159).
\\
$^*$ Corresponding author}

\keywords{arc, projective plane, finite geometry, absolutely irreducible curve}
\begin{abstract}
The subject of this paper is the study of small complete arcs in $\PG$, for $q$ odd, with at least $(q+1)/2$ points on a conic. We give a short comprehensive proof of the completeness problem left open by  Segre in his seminal work ~\cite{Segre1967}. This gives an alternative to Pellegrino's long proof which was obtained in a series of papers in the 1980s. As a corollary of our analysis, we obtain a counterexample to a misconception in the literature \cite{Hirschfeld_review}.
\end{abstract}
\maketitle
\section{Introduction and motivation}
\par Let $\F$ denote the finite field with $q$ elements, and $\PG$ denote the projective plane over $\F$. An \emph{arc} in $\PG$ is a set of points no three of which are collinear. An arc $K$ is \emph{complete} if $K$ can not be extended to a larger arc. Arcs in $\PG$ are also known as {\em planar arcs} and are the two-dimensional version of arcs in projective spaces of any dimension $\geq 2$. Given an arc $K$ we say that a point of $\PG \setminus K$ is {\em $K$-covered} if it lies on a secant of $K$ and {\em $K$-free} otherwise.

\bigskip

\par It is well known that arcs are equivalent to MDS codes and there is a vast literature on the subject. As we aim to keep this paper short we refer to \cite[Chapter 13]{HiKoTo_book} and the recent expository paper \cite{MR4073887} for further details and references.

\bigskip

\par An easy geometric argument allows one to show that an arc can have size at most $q+1$ if $q$ is odd and $q+2$ if $q$ is even. In $1955$, Beniamino Segre classified arcs of size $q+1$ as conics when $q$ is odd. For $q$ even the classification of arcs of size $q+2$ (called {\em hyperovals}) remains a challenging open problem. Segre's initial studies on arcs naturally lead to the following questions:
\begin{enumerate}
\item What are the possible sizes of complete arcs in $\PG$?
\item How many points can a conic have in common with a complete arc which is not a conic?
\end{enumerate}
These questions have been extensively studied and many mathematicians have contributed to the large variety of constructions of complete arcs.
In many of these constructions, a large portion of the points of the arc is chosen among the points of a conic or a cubic curve, following up on ideas from Segre~\cite{Segre1962} and Lombardo-Radice~\cite{Lombardo1956}.

\par One of the first such constructions for $q$ odd, suggested by Segre, gives rise to arcs containing roughly half of the points of a conic. The construction starts from a conic $\C$ and one point $R$ outside $\C$. Depending on whether $R$ is external (on two tangents) or internal (on no tangents) to $\C$ one obtains an arc of size $(q+5)/2$ or $(q+3)/2$ by choosing one point on each of the secants through $R$ (and including the points of tangency in the case that $R$ is external). This clearly gives an arc $K$, but it remains to determine whether the arc $K$ is complete or not.
Also, if $K$ is not complete, then the following natural problem is to decide which points (how many) can be added to complete $K$.
In order to prove the completeness of an arc, there are some methods known, relying on results from group theory, algebraic geometry, or Galois fields, some of which were used to prove the results listed below.

\bigskip

\par The next paragraphs describe some more details concerning the complete arcs obtained from Segre's construction.

\bigskip

\par Assume that $R$ is an external point to the conic $\C$. If $q \equiv 3\;\m$, then completeness of the arc $K$ of size $(q+5)/2$ obtained from the above mentioned construction, was proved by Segre in~\cite[p.~152]{Segre1967}. In $1992$, Pellegrino proved in~\cite{Pellegrino1992} that if $K$ is a complete arc, for $q$ odd, containing $(q+3)/2$ points from an irreducible conic $\C$ of $\PG$, then the possible sizes for $K$ are $(q+5)/2$ and $(q+7)/2$ in general, with the additional values $(q+9)/2$ for $q \equiv 3\;\m$. Later, in $2001$, using linear collineations, Korchm\'{a}ros and Sonnino gave an alternative proof to Pellegrino's result that in $\PG$, with $(q+1)/2$ an odd prime, every arc sharing $(q+3)/2$ points with a conic contains at most four points outside the conic. This number reduces to two, when, in addition $q^2\equiv 1 \;(\hbox{mod}\;16)$ is assumed~\cite[Theorem 1.1]{KoSo2003}.

\bigskip

\par In the case that $R$ is an internal point to the conic $\C$, the completeness of $K$ was left as an open problem by Segre in \cite{Segre1967}. In this paper, we give a short proof for the solution to this problem.
Our proof gives an alternative to the proofs contained in a series of papers by Pellegrino from the $1980$'s (see~\cite{Pellegrino1981, Pellegrino1982, Pellegrino1982_UMI, Pellegrino1981_UMI}.) The papers are written in Italian and some of them we found difficult to get hold of. In $1981$ Pellegrino constructed examples of complete arcs of size $(q+3)/2$ in $\PG$ for $q \equiv 3\;\m$ in~\cite{Pellegrino1981}. One year later, he constructed complete arcs of size $(q+3)/2$ in $\PG$ for $q \equiv 1\;\m$ in~\cite{Pellegrino1982}. The method used in the papers relies on results in finite fields from~\cite{Pellegrino1982_UMI} and~\cite{Pellegrino1981_UMI}, whereas our proof uses bounds for the number of points on algebraic curves over a finite field.

\bigskip

\par With this paper we aim to provide comprehensive proof of the problem left open by Segre in \cite{Segre1967}. As a corollary of our proof, we obtain examples of arcs which give counterexamples to the statement in~\cite{Hirschfeld_review}. This concerns the existence of a line satisfying the hypothesis for the main theorem from \cite{Pellegrino1993} in which Pellegrino studied the completion of arcs sharing (any) $(q+1)/2$ points with a conic but with an extra assumption.

\bigskip

\par The paper is organized as follows. Section $2$ contains preliminaries and Section $3$ contains the main results of the paper. In Section $4$, we give a counterexample to the claim in~\cite{Hirschfeld_review}. Using the same construction in the paper, for $q=9$, $11$, $13$ sizes of complete arcs are given in Section $5$.

\section{Preliminaries}
\par In this section we collect the necessary background for the proof of the completeness of the arc $K=H\cup \{R\}$ of size $(q+3)/2$ where $H$ is contained in a conic $\C$ and consists of one point on each of the secants of $\C$ through $R$.

\bigskip

\par An algebraic curve $\mathcal{F}$ in $\PG$ is equal to the zero locus ${\mathcal{Z}}(F)$ of a homogeneous polynomial $F \in \F[X,Y,Z]$. The curve $\mathcal{F}$ is $\emph{irreducible}$ if $F$ is irreducible over $\F$; $\mathcal{F}$ is $\textit{absolutely\;irreducible}$ if $F$ is irreducible over $\overline{\mathbb{F}}_q$, the algebraic closure of $\F$. If $P=\langle U\rangle$ is a point of the irreducible curve $\mathcal{F}=\mathcal{Z}(F)$ of degree $d$ and $\ell=\langle U, V\rangle$ is a line not contained in $\mathcal{F}$, then the $\emph{intersection multiplicity}$ $m_P(\ell, \mathcal{F})$ is the multiplicity of $t=0$ in $F(U+tV)$. The $\emph{multiplicity}$ \label{multiplicity} of $P$ on $\mathcal{F}$, denoted by $m_P(\mathcal{F})$, is the minimum of $m_P(\ell, \mathcal{F})$ for all lines $\ell$ through $P$. Then $P$ is a $\emph{singular}$ point of $\mathcal{F}$ if $m_P(\mathcal{F})> 1$. A line $\ell$ is a $\emph{tangent line}$ to $\mathcal{F}$ at $P$ if $m_P(\ell,\mathcal{F})>m_P(\mathcal{F})$. The point $P$ is called an $\emph{ordinary singular point}$ if $\mathcal F$ has $m_P(\mathcal{F})$ distinct tangents at the point $P$.
The set of $\emph{$\F$-rational\;points}$ of the curve $\mathcal{F}$ is defined as the set of points of $\PG$ where $F$ vanishes.

\begin{lemma}\cite[Lemma $8$]{SeBa1971}\label{SeBa1971} Let $\mathcal{F}$ be a projective plane curve of degree $k$ defined over an arbitrary field $E$. If there exists a point $P$ of $\mathcal{F}$ and a tangent line $\ell$ at $P$, for which
\begin{enumerate}[label=(\roman*)]
  \item  $\ell$ counts once among the tangents of $\mathcal{F}$ at $P$,
  \item  the intersection multiplicity of $\mathcal{F}$ and $\ell$ at $P$ is equal to $k$, and
  \item $\mathcal{F}$ has no linear component through $P$,
\end{enumerate}
then $\mathcal{F}$ is absolutely irreducible.
\end{lemma}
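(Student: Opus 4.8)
The plan is to argue by contradiction. Suppose $\mathcal{F}=\mathcal{Z}(F)$ fails to be absolutely irreducible; then over the algebraic closure $\overline{E}$ of $E$ we may factor $F=F_1F_2$ with each $F_i$ homogeneous of degree $k_i\geq 1$ and $k_1+k_2=k$, and we set $\mathcal{F}_i=\mathcal{Z}(F_i)$. Since all three hypotheses are local at $P$ and respect this factorization, I would fix an affine chart with $P$ at the origin, dehomogenize to $f=f_1f_2$, and carry out the whole argument over $\overline{E}$.

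The first step controls how $\ell$ meets the two factors. The line $\ell$ passes through $P$, and by hypothesis (iii) $\mathcal{F}$ has no linear component through $P$, so $\ell$ is a component of neither $\mathcal{F}_1$ nor $\mathcal{F}_2$; hence B\'ezout's theorem gives $m_P(\ell,\mathcal{F}_i)\leq k_i$ for $i=1,2$. Intersection multiplicity is additive over the factorization $f=f_1f_2$ (substitute a parametrization of $\ell$ and add orders of vanishing), so
\[
m_P(\ell,\mathcal{F}_1)+m_P(\ell,\mathcal{F}_2)=m_P(\ell,\mathcal{F})=k=k_1+k_2,
\]
where the equality $m_P(\ell,\mathcal{F})=k$ is hypothesis (ii). Together with the two B\'ezout bounds this forces $m_P(\ell,\mathcal{F}_i)=k_i$ for both $i$; in particular each $k_i\geq 1$, so $P$ lies on both $\mathcal{F}_1$ and $\mathcal{F}_2$.

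The second step brings in the tangent cones. At $P$ the lowest-degree form of $f$ is the product of the lowest-degree forms of $f_1$ and $f_2$, so the tangent cone of $\mathcal{F}$ at $P$ is the product of the tangent cones of $\mathcal{F}_1$ and $\mathcal{F}_2$ at $P$, and $m_P(\mathcal{F})=m_P(\mathcal{F}_1)+m_P(\mathcal{F}_2)$. By hypothesis (i) the linear form $L$ defining $\ell$ occurs with multiplicity exactly one in the tangent cone of $\mathcal{F}$, hence with multiplicity exactly one in that product; therefore $L$ divides the tangent cone of exactly one of the two factors, say $\mathcal{F}_1$, and does not divide the tangent cone of $\mathcal{F}_2$. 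But the latter says precisely that $\ell$ is not a tangent line of $\mathcal{F}_2$ at $P$, i.e. $m_P(\ell,\mathcal{F}_2)=m_P(\mathcal{F}_2)$. Comparing with the first step, $m_P(\mathcal{F}_2)=k_2=\deg\mathcal{F}_2$. Now a plane curve of degree $d$ having a point of multiplicity $d$ has, in affine coordinates centered at that point, a defining polynomial that is homogeneous of degree $d$ in the two coordinates, hence a product of $d$ lines through the point; so $\mathcal{F}_2$ is a union of $k_2\geq 1$ lines through $P$. Any one of them is a linear component of $\mathcal{F}$ through $P$, contradicting (iii). This contradiction proves the lemma.

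The ingredients I would invoke as standard — B\'ezout, additivity of intersection multiplicity, multiplicativity of tangent cones, and the fact that a degree-$d$ curve with a $d$-fold point is a union of $d$ lines through that point — are all routine over $\overline{E}$. The point that needs the most care is the bookkeeping of the second step: reading hypothesis (i) correctly as a statement about the exponent of the fixed linear form $L$ in the factored tangent cone, so that $L$ is stripped off one factor entirely, and then combining this with (ii) and (iii) to force that factor to be a cone. It is exactly this interplay of the three hypotheses — (ii) making all the intersection concentrate at $P$, (i) localizing the tangency to a single component, and (iii) forbidding the resulting line — that makes the argument close.
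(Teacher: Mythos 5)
The paper does not prove this lemma at all: it is quoted verbatim from Segre--Bartocci \cite{SeBa1971}, Lemma 8, and used as a black box, so there is no internal proof to compare against. Your argument is correct and complete, and it is essentially the standard proof of this irreducibility criterion: factor $F=F_1F_2$ over $\overline{E}$; note that (iii) forbids $\ell$ from being a component of either factor, so additivity of the order of vanishing of $F_i$ along a parametrisation of $\ell$ together with the bound $m_P(\ell,\mathcal{F}_i)\le k_i$ and hypothesis (ii) forces $m_P(\ell,\mathcal{F}_i)=k_i$ for both $i$ (in particular $P\in\mathcal{F}_1\cap\mathcal{F}_2$); then multiplicativity of tangent cones plus (i) shows $\ell$ is not tangent to one factor, say $\mathcal{F}_2$, whence $m_P(\mathcal{F}_2)=m_P(\ell,\mathcal{F}_2)=k_2$, so $\mathcal{F}_2$ splits into $k_2$ lines through $P$, contradicting (iii). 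All the facts you invoke (the Bézout-type bound, additivity, the product rule for lowest-degree forms, and the fact that a degree-$d$ curve with a $d$-fold point is a union of $d$ lines through it) are standard and you deploy them at the right places; in particular you correctly read hypothesis (i) as the statement that the linear form of $\ell$ occurs with exponent exactly one in the tangent cone, which is the step that lets the argument close. So your proposal supplies a self-contained proof of a result the paper only cites.
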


\par The following theorem is a special case of the Hasse-Weil bound concerning the number of $\F$-rational points lying on an absolutely irreducible projective curve $\mathcal{F}$ with genus $g$.

\begin{theorem}\cite{Weil1945}\label{Weil1945} If $\mathcal{F}$ is an absolutely irreducible nonsingular projective curve in $\PG$ of degree $d$, and $N$ denotes the number of $\F$-rational points of $\mathcal{F}$, then \[|q+1-N| \leq  (d-1)(d-2)\sqrt{q}.\]
\end{theorem}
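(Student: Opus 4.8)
The plan is to deduce the inequality from the analytic structure of the zeta function of the curve $\mathcal{F}$, together with the genus--degree formula for a nonsingular plane curve. There are three ingredients: (i) rationality of the zeta function and the shape of its numerator; (ii) the Riemann Hypothesis for curves over a finite field, i.e.\ the bound $|\alpha| = \sqrt{q}$ on the reciprocal roots of that numerator; and (iii) the identity $g = (d-1)(d-2)/2$ for the genus of a smooth plane curve of degree $d$.

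First I would introduce, for each $n \geq 1$, the number $N_n$ of $\mathbb{F}_{q^n}$-rational points of $\mathcal{F}$ (so $N_1 = N$) and the zeta function
\[
Z(\mathcal{F},t) = \exp\!\left(\sum_{n \geq 1} \frac{N_n}{n}\, t^n\right).
\]
Applying the Riemann--Roch theorem to divisors on $\mathcal{F}$ --- counting effective divisors of each degree and splitting the count according to Riemann--Roch --- one obtains a polynomial $P(t) \in \mathbb{Z}[t]$ with $P(0) = 1$ and $\deg P = 2g$ such that $Z(\mathcal{F},t) = P(t)/((1-t)(1-qt))$, together with the functional equation $P(t) = q^{g} t^{2g} P(1/(qt))$. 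Writing $P(t) = \prod_{i=1}^{2g}(1 - \alpha_i t)$ and comparing logarithmic derivatives of the two expressions for $Z$ gives $N_n = q^n + 1 - \sum_{i=1}^{2g} \alpha_i^{\,n}$; in particular $N = q + 1 - \sum_{i} \alpha_i$, so that $|q+1-N| \leq \sum_{i=1}^{2g} |\alpha_i|$.

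It then remains to bound the $|\alpha_i|$. The crucial point --- the Riemann Hypothesis for curves --- is that $|\alpha_i| = \sqrt{q}$ for every $i$. The classical argument interprets $N_n$ as the number of fixed points of the $n$-th power of the $q$-power Frobenius endomorphism $\phi$ acting on $\mathcal{F}$, transfers the computation to the Jacobian of $\mathcal{F}$ (or to an $\ell$-adic Tate module), and uses the positivity of the Rosati involution --- equivalently the Hodge index / Castelnuovo--Severi inequality on the surface $\mathcal{F} \times \mathcal{F}$ applied to the graph of $\phi$ --- to show that $P$ is the characteristic polynomial of $\phi$ and that all of its reciprocal roots have absolute value $\sqrt{q}$. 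Granting this, $\sum_{i} |\alpha_i| = 2g\sqrt{q}$, hence $|q+1-N| \leq 2g\sqrt{q}$. Finally, since $\mathcal{F}$ is a nonsingular plane curve of degree $d$, the genus--degree formula gives $g = \binom{d-1}{2} = (d-1)(d-2)/2$, so $2g = (d-1)(d-2)$ and the stated bound follows.

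The main obstacle is of course the Riemann Hypothesis step: ingredients (i) and (iii) are formal consequences of Riemann--Roch, but establishing $|\alpha_i| = \sqrt{q}$ requires genuine input --- either Weil's theory of correspondences and abelian varieties as above, or the elementary Stepanov--Bombieri method, in which auxiliary functions with prescribed high-order vanishing at the rational points first yield a one-sided estimate $N \leq q + 1 + O(\sqrt{q})$ and a base-change/bootstrapping trick then sharpens it to the symmetric bound with constant $2g = (d-1)(d-2)$.
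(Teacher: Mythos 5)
The paper does not prove this statement at all: Theorem~\ref{Weil1945} is quoted verbatim from Weil, and is used later only through the consequence $|N-(q+1)|\leq 2g\sqrt{q}$ together with a genus computation for the specific singular quartics arising in Lemma~\ref{involution}. So there is no internal proof to compare against; what you have written is a correct roadmap of the classical argument that the paper outsources to the literature. Your outline is accurate: rationality of $Z(\mathcal{F},t)$ and the functional equation follow from Riemann--Roch, the identity $N_n=q^n+1-\sum_i\alpha_i^n$ is the standard logarithmic-derivative computation, the Riemann Hypothesis for curves gives $|\alpha_i|=\sqrt{q}$ (here absolute irreducibility of $\mathcal{F}$ is exactly what is needed for this machinery, so the hypothesis is used), and the genus--degree formula $g=(d-1)(d-2)/2$ for a nonsingular plane curve converts $2g\sqrt{q}$ into the stated $(d-1)(d-2)\sqrt{q}$. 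Be aware, though, that as a self-contained proof your text still has a deliberate hole: the bound $|\alpha_i|=\sqrt{q}$ is the entire content of the theorem, and you only name the two known routes to it (Weil's positivity of the Rosati involution, equivalently Castelnuovo--Severi on $\mathcal{F}\times\mathcal{F}$, or Stepanov--Bombieri) without carrying either out. Since the paper itself treats the result as a citation, this level of detail is reasonable, but you should present it as a reduction to the Riemann Hypothesis for curves plus the genus--degree formula rather than as a complete proof.
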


\section{Completeness proof}

From now on $q$ will always be odd. We denote the set of squares in $\F$ by $\square$ and the set of non-squares in $\F$ by $\triangle$.
The following lemma gives an algebraic way to distinguish between internal and external points of a conic in $\PG$.

\begin{lemma} If $\C=\mathcal{Z}(f)$ is a conic in $\PG$, $q$ odd, and $f(P)\in \square$ ($\triangle$) for an external point $P$, then
a point $Q\notin \C$ is external (internal) if $f(Q)\in \square$ and internal (external) if $f(Q) \in \triangle$.
\end{lemma}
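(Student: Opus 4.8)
The plan is to reduce $\C$ to a normal form and then read off the tangent lines directly. Since $f$ is homogeneous of degree $2$, we have $f(\lambda u)=\lambda^{2}f(u)$ for all $\lambda\in\F^{\ast}$, so the square class of $f$ is a well-defined invariant of a projective point, and it is nonzero exactly on $\PG\setminus\C$. Hence the lemma is equivalent to the assertion that there is a single class in $\F^{\ast}/\square$ with the property that a point $Q\notin\C$ is external if and only if $f(Q)$ belongs to that class; the class is then recovered as the one containing $f(P)$.

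I would next use that every (non-degenerate) conic of $\PG$ with $q$ odd is projectively equivalent to $\C_{0}=\mathcal{Z}(X^{2}-YZ)$. Choosing coordinates so that $\C=\C_{0}$, the given $f$ is a quadratic form with $\mathcal{Z}(f)=\C_{0}$, hence $f=c\,(X^{2}-YZ)$ for some $c\in\F^{\ast}$. Replacing $f$ by $c^{-1}f$ only has the effect of possibly interchanging $\square$ and $\triangle$ globally, which does not affect the statement since it is phrased relative to the value at $P$; and the external/internal dichotomy is preserved by projective transformations. So it suffices to treat $\C=\C_{0}$ and $f=X^{2}-YZ$.

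For $\C_{0}$ the points are $P_{t}=(t,t^{2},1)$ with $t\in\F$ and $P_{\infty}=(0,1,0)$, the tangent at $P_{t}$ is $2tX-Y-t^{2}Z=0$, and the tangent at $P_{\infty}$ is $Z=0$. Let $Q=(a,b,c)$ with $f(Q)=a^{2}-bc\neq 0$. If $c=0$, then $a\neq 0$, the point $Q$ lies on $Z=0$ and on exactly one line $2tX-Y-t^{2}Z=0$, so $Q$ is external, and $f(Q)=a^{2}\in\square$. If $c\neq 0$, then $Q$ lies on the tangent at $P_{t}$ precisely when $ct^{2}-2at+b=0$; the discriminant of this quadratic is $4(a^{2}-bc)=4f(Q)\neq 0$, so it has two roots in $\F$ when $f(Q)\in\square$ and no root when $f(Q)\in\triangle$. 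In every case $Q\notin\C_{0}$ is external if and only if $f(Q)\in\square$, which is exactly the assertion, with the distinguished class equal to $\square$ for this choice of $f$.

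The argument is elementary and I do not anticipate a genuine difficulty; the only points deserving care are the justification of the normalization (notably that rescaling $f$ is harmless, given the phrasing of the statement) and the brief case distinction $c=0$ versus $c\neq 0$. As an alternative to the last paragraph, one can argue without coordinates: for $Q=\langle u\rangle\notin\C$ the polar line $\ell_{Q}$ of $Q$ meets $\C$ in the points of contact of the tangents through $Q$, so $Q$ is external if and only if $|\ell_{Q}\cap\C|=2$; identifying $\ell_{Q}$ with $\mathrm{PG}(1,q)$, the set $\ell_{Q}\cap\C$ is the zero locus of the binary quadratic form $f|_{\ell_{Q}}$, which is non-degenerate because $f(u)\neq 0$, so $Q$ is external if and only if $\mathrm{disc}(f|_{\ell_{Q}})\in\square$; and computing the Gram matrix of $f$ in a basis adapted to the orthogonal decomposition $\F^{3}=\langle u\rangle\oplus u^{\perp}$ shows that $\mathrm{disc}(f|_{\ell_{Q}})$ and $-f(u)\det A$ lie in the same square class, where $A$ is the symmetric matrix of $f$. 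Since the square class of $\det A$ does not depend on $Q$, the type of $Q$ is again determined by the square class of $f(Q)$.
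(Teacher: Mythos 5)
Your proof is correct. Note, however, that the paper itself offers no proof of this lemma: it is stated as a classical fact (the standard dichotomy that, for $q$ odd, the value of a defining quadratic form at a point off the conic determines external/internal up to a global square class), and the authors immediately move on to the specific conic $\mathcal{Z}(XY-Z^2)$. So there is nothing in the paper to compare step by step; your argument supplies the missing verification. Both of your routes are sound: the coordinate computation with $X^2-YZ$ (counting solutions of $ct^2-2at+b=0$ via the discriminant $4(a^2-bc)$, plus the separate check on the tangent $Z=0$) is the most elementary, while your coordinate-free variant via the polar line and the orthogonal splitting $\mathbb{F}_q^3=\langle u\rangle\perp u^{\perp}$, giving $\mathrm{disc}(f|_{\ell_Q})\equiv -f(u)\det A$ modulo squares, is the classical argument and has the advantage of making the invariance of the dichotomy transparent without any normalization. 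One small point deserving a sentence in the normalization route: the step ``$\mathcal{Z}(f)=\mathcal{C}_0$ hence $f=c(X^2-YZ)$'' is immediate for $q\geq 5$, since the conic contains five points no three collinear and these determine the form up to scalar (and a degenerate form cannot have a $(q+1)$-arc as its zero set), but for $q=3$ the point set has only four points and the claim needs either a short ad hoc check or the convention that $f$ is the given non-degenerate form defining $\mathcal{C}$; your second, polarity-based argument avoids this issue entirely since it never rescales or normalizes $f$.
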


\par We fix $\C$ to be the conic $\C=\mathcal{Z}(XY-Z^2)$,
and we define the set $H\subseteq \C$ as
$$H=\{(1,s^4,s^2)\; |\; s \in \F\}$$ and $H'=\C\setminus H$.
Our aim is to determine the set of $H$-free points in $\PG$.

\bigskip

\par For the point
$$R(a,b,c)\notin \C$$
we define an involution $\tau_R$ on the points of $\C$ as follows. If $P(1,s^2,s)$ is a point of $\C$, then $\tau_{R}(P)=Q(1,t^2,t)$ is the second intersection of the line $PR$ with $\C$. If $PR$ is a tangent line, then we define $\tau_R(P)=P$. The collinearity of the points $P$, $Q$ and $R$ gives the equation \[ast+b-c(s+t)=0.\] Therefore, $t=\frac{c s-b}{a s-c}$.
The stabiliser of $\C$ inside PGL$(3,q)$ is isomorphic to PGL$(2,q)$, and under a suitable isomorphism
this involution corresponds to the projectivity $\varphi_R$ of PG$(1,q)$ with matrix \[M_R=\left(
               \begin{array}{cc}
                 c & -b \\
                 a & -c \\
               \end{array}
             \right).\]
Notice that $-c^2+ba \neq 0$, since $R$ does not belong to the conic $\C$.
Parameterising the projective line as the set $\F\cup\{\infty\}$. We consider $\infty$ as a non-square, i.e. $\infty\in \triangle$. We obtain $\varphi_R(s)=\frac{cs-b}{as-c}$ for $as-c\neq 0$, $\varphi_R(\infty)=ca^{-1}$ and $\varphi_R(ca^{-1})=\infty$. This leads to the following characterisation of $H$-covered points.
\begin{lemma}
The point $R$ is $H$-covered if and only if there
exist some $x\in \square$ for which $\varphi_R(x)\in \square$.
\end{lemma}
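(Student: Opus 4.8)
The plan is to convert the geometric statement ``$R$ lies on a secant of $H$'' into the stated arithmetic condition on $\varphi_R$, using the parameterisation of $\C$ and the hypothesis $H\subseteq\C$. First I would record an elementary reduction: a line meeting $H$ in two distinct points meets $\C$ in (at least) two points and is therefore a secant of $\C$, whereas a tangent line of $\C$ meets $\C$, and a fortiori $H$, in at most one point, and an external line of $\C$ in none. Hence $R$ is $H$-covered if and only if some secant line of $\C$ through $R$ has both of its points of intersection with $\C$ lying in $H$.

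Next I would read off the condition $P\in H$ from the parameter. Writing a point of $\C$ as $P(1,s^{2},s)$ with parameter $s\in\F$, and the remaining point $(0,1,0)$ with parameter $\infty$, the identity $(1,s^{4},s^{2})=(1,u^{2},u)$ with $u=s^{2}$ shows that $H$ is exactly the set of points of $\C$ whose parameter lies in $\square$ (here one uses $0\in\square$, so that $(1,0,0)\in H$), while $H'$ is the set of points whose parameter lies in $\triangle\cup\{\infty\}$ --- this is precisely why $\infty$ is declared a non-square, so that $(0,1,0)\in H'$ fits the pattern. A secant of $\C$ through $R$ meets $\C$ in a point $P$ of parameter $s$ and in $\tau_R(P)$, of parameter $\varphi_R(s)$, and these two points are distinct exactly when $s$ is not a fixed point of $\varphi_R$. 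Combining with the reduction above, $R$ is $H$-covered if and only if there exists $x\in\square$ with $\varphi_R(x)\in\square$ and $\varphi_R(x)\neq x$; the cases $x=\infty$ or $\varphi_R(x)=\infty$ are automatically excluded since $\infty\notin\square$, consistently with $(0,1,0)\notin H$.

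It remains to drop the clause $\varphi_R(x)\neq x$. The map $\varphi_R$ is an involution of $\PGg(1,q)$, since $M_R^{2}$ is a nonzero scalar matrix (namely $(c^{2}-ab)I$, nonzero because $R\notin\C$); under the isomorphism between the stabiliser of $\C$ and $\mathrm{PGL}(2,q)$, a fixed point of $\varphi_R$ corresponds to a point $P$ of $\C$ with $\tau_R(P)=P$, i.e.\ to the contact point of a tangent line from $R$ to $\C$. In the case relevant here, $R$ an internal point, there are no tangents from $R$, so $\varphi_R$ is fixed-point free, the clause $\varphi_R(x)\neq x$ is vacuous, and the proof is complete; this is the situation one should keep in mind. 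The one point that requires care --- and the only real obstacle --- is precisely this fixed-point case: for an external $R$ one must separately deal with the degenerate possibility that the only $x\in\square$ with $\varphi_R(x)\in\square$ are fixed points of $\varphi_R$ (equivalently, contact points of tangents from $R$ that happen to lie in $H$), since such an $x$ does not produce a genuine secant of $H$ through $R$. Apart from this, and the boundary bookkeeping at the parameters $0$ and $\infty$ already noted, the lemma is a direct translation through the parameterisation.
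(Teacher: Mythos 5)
Your route is the same as the paper's: translate ``$H$-covered'' through the parameterisation of $\C$, so that secants of $H$ through $R$ correspond to pairs $\{x,\varphi_R(x)\}$ of square parameters, and you do this more carefully than the paper does (your identification of $\infty$ with $(0,1,0)\in H'$ is the correct one; the paper's proof says $(0,0,1)$, which is not even on $\C$). The gap is in how you dispose of the clause $\varphi_R(x)\neq x$. You prove the equivalence only when $R$ is internal, where $\varphi_R$ is fixed-point free, and you dismiss the external case as ``not the situation relevant here''. That dismissal is not justified by the paper: the lemma is stated for every $R\notin\C$, and it is invoked in exactly the unproved case. In Lemma~\ref{involution} the hypothesis ``$R(a,b,c)$ with $abc\neq 0$ is $H$-free'' is imposed on an arbitrary point, internal or external, and is converted -- via the contrapositive of the present lemma -- into ``$\varphi_R(x^2)=\mu y^2$ for every $x$''; likewise the case $a=b=0$, $q\equiv 1\pmod 4$ applies the ``if'' direction to an external point. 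So the external case is precisely where the statement is needed, and your proposal leaves it open.

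To your credit, the obstacle you flag is genuine and is silently passed over by the paper's one-line proof: if the only $x\in\square$ with $\varphi_R(x)\in\square$ are fixed points of $\varphi_R$ (i.e.\ parameters of contact points of tangents from an external $R$ that happen to lie in $H$), then no secant of $H$ through $R$ is produced, and for very small $q$ the ``if'' direction read literally does fail (for $q=3$, take $R$ the pole of the unique chord of $H$: both fixed parameters are squares, yet $R$ is $H$-free). A complete treatment must therefore either restate the equivalence with the extra condition ``$x$ not fixed by $\varphi_R$'' -- which is all that the later application needs, since at most two square parameter values are thereby excluded and the point count on the curve in Lemma~\ref{involution} is only perturbed by a bounded amount -- or argue that in the range of $q$ under consideration a fixed-point witness forces a non-fixed one. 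As written, your argument proves the stated lemma only for internal $R$, so it does not yet establish the result, even though the defect it exposes is one the paper itself should have addressed.
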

\begin{proof}
Observing that $\infty$ corresponds to the point $(0,0,1)$ which does not belong to $H$, the proof follows from the above.
\end{proof}

\subsection{} First we focus on the case $abc\neq 0$.

\begin{lemma}\label{involution}
If $q\geq 17$ and the point $R(a,b,c)$, where $abc\neq0$, does not belong to $\C$ then $R(a,b,c)$ is $H$-covered.
\end{lemma}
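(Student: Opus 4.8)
The plan is to use Lemma (the $H$-covered characterisation) and reduce the problem to finding $x\in\square$ with $\varphi_R(x)\in\square$, i.e. to finding a point on an associated algebraic curve. Writing $\varphi_R(s)=\frac{cs-b}{as-c}$, the condition ``$s$ is a square and $\varphi_R(s)$ is a square'' becomes: there exist $u,v\in\F^\ast$ with $s=u^2$ and $cs-b=v^2(as-c)$, that is $cu^2-b=v^2(au^2-c)$. This is (the affine part of) a plane curve $\mathcal{F}=\mathcal{Z}(F)$ with $F=cU^2-bW^2-V^2(aU^2-cW^2)$ after homogenising, a curve of degree $4$ in the variables $U,V,W$. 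The strategy is: (1) show $\mathcal{F}$ is absolutely irreducible (using Lemma \ref{SeBa1971}), (2) apply the Hasse–Weil bound (Theorem \ref{Weil1945}, or its singular-curve variant via the genus) to guarantee $\mathcal{F}$ has many $\F$-rational points once $q\geq 17$, and (3) discard the bounded number of ``bad'' points — those with $u=0$, $v=0$, points at infinity, points where $au^2-c=0$, and points giving $s\in H'$ rather than $H$ — to conclude a genuine covering point exists.

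First I would set up the curve explicitly and check its degree and its components over $\overline{\F}_q$. The quartic $F=cU^2-bW^2-V^2(aU^2-cW^2)$ is visibly quadratic in $V^2$-free form; the danger is that it factors, e.g. as a product of two conics or as something involving a linear factor. Because $abc\neq 0$ one expects genuine irreducibility, and I would verify condition of Lemma \ref{SeBa1971}: locate a smooth $\F$-rational (or $\overline\F_q$-rational) point $P$ on $\mathcal{F}$, compute the tangent line $\ell$ there, check that $\ell$ meets $\mathcal{F}$ with multiplicity $4$ at $P$ and is a simple tangent, and check $\mathcal{F}$ has no linear component through $P$ — a natural candidate is a point ``at infinity'' or a flex coming from the $U^2V^2$ structure. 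That hands us absolute irreducibility in one stroke.

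Then I would count: an absolutely irreducible plane quartic has arithmetic genus $3$, so after accounting for singularities its geometric genus $g\le 3$, and the Hasse–Weil bound gives at least $q+1-2g\sqrt q - (\text{correction for singular/infinite points})$ affine $\F$-rational points with $u\neq 0$, $v\neq 0$. Concretely the number of affine points $(u,v)$ with $uv\neq 0$ and $au^2-c\neq 0$ is at least $q+1-6\sqrt q$ minus an $O(1)$ count of excluded points; one then checks $q\geq 17$ forces this to be positive. A subtlety is the symmetry $(u,v)\mapsto(\pm u,\pm v)$: several pairs $(u,v)$ may give the same $s=u^2$, but that only helps (we need existence of one good $s$), and similarly we must make sure the resulting $s$ lands in $H$ and not in $H'$ — but $H'\subseteq\C$ is just the two points with $s\notin$ the image, namely the one point $(0,0,1)$ already handled plus at most one more, a negligible correction.

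The main obstacle will be step (1): proving absolute irreducibility of the quartic uniformly for all $a,b,c$ with $abc\neq 0$. The factorisation behaviour can depend on whether certain subexpressions in $a,b,c$ are squares, so I expect to either split into a few cases or — preferably — exhibit a single point/tangent configuration satisfying Lemma \ref{SeBa1971} whose verification goes through for all admissible $(a,b,c)$. A secondary, more bookkeeping-type difficulty is getting the constant in the point count right: one must carefully enumerate all points of $\mathcal{F}$ at infinity and all affine points with $u=0$, $v=0$, or $au^2=c$, subtract them, and confirm the threshold $q\geq 17$ (rather than something larger) actually suffices; this is routine but must be done cleanly, as the whole lemma hinges on that numeric bound.
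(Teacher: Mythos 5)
Your strategy is the mirror image of the paper's. The paper does not hunt for points on the ``good'' curve $cX^2-b=Y^2(aX^2-c)$; instead it assumes $R$ is $H$-free, encodes that assumption in the curve $\mathcal{Z}\bigl((cX^2-b)-\mu Y^2(aX^2-c)\bigr)$ with $\mu$ a fixed \emph{non-square} (so that $H$-freeness forces $\varphi_R(x^2)=\mu y^2$ for every square $x^2$), shows via Lemma~\ref{SeBa1971} that this quartic is absolutely irreducible with two ordinary nodes at $X_\infty,Y_\infty$ and hence genus $1$, counts at least $2q-6$ rational points supplied by the hypothesis, and gets the contradiction $2q-6\le q+1+2\sqrt q$, i.e.\ $q\le 13$, from the \emph{upper} Hasse--Weil bound. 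Your version ($\mu=1$, lower bound, then discard bad points) is a legitimate alternative, and the irreducibility and node analysis you would need is essentially identical to the paper's; what the paper's contradiction route buys is that all the delicate bookkeeping disappears: one only needs irreducibility, the genus, and the upper bound, with no accounting of places over the nodes, points with $au^2=c$, $u=0$, $v=0$, or fixed points of $\varphi_R$ (tangent lines through $R$), and no worry about whether the surviving count is positive exactly at $q=17$.

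That bookkeeping is where your proposal currently has a genuine quantitative gap. You allow yourself only ``geometric genus $g\le 3$'' for the quartic, but then the lower bound $q+1-2g\sqrt q$ is vacuous for all $q<37$, so the threshold $q\ge 17$ cannot come out of your estimate as stated. To rescue it you must prove exactly what the paper proves: that the projective closure has precisely two singular points, both ordinary double points at $(1,0,0)$ and $(0,1,0)$, so $g=\binom{3}{2}-2=1$; and even then you must count the excluded points tightly (at most four rational places over the two nodes, no affine points with $au^2=c$ since that would force $c^2=ab$, and --- if you insist on producing a genuine secant rather than invoking Lemma~3.2 verbatim --- up to eight points with $u^2=v^2$ coming from tangent lines through an external $R$), because with sloppy constants the inequality fails at $q=17$ and only holds from roughly $q\ge 23$ onward. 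Two smaller points: your displayed $F=cU^2-bW^2-V^2(aU^2-cW^2)$ is not homogeneous (the first two terms need $W^2$-corrections to reach degree $4$), and your worry about landing in $H'$ is unnecessary, since $H$ consists exactly of the points of $\C$ with square parameter, so $\varphi_R(u^2)=v^2$ automatically gives a point of $H$; the only exceptional parameter is $\infty$, i.e.\ $au^2=c$.
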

\begin{proof}
To the point $R(a,b,c)$ we associate the affine algebraic curve $\mathcal{F}_{a,b,c}=\mathcal{Z}(f)$ where \[f(X,Y)=(c X^2-b)-\mu Y^2(a X^2-c)\;\mbox{with}\;\mu\in\triangle.\]
We will show that the hypothesis that $R(a,b,c)$ is $H$-free leads to a contradiction with the number of points on the curve $\mathcal F_{a,b,c}$ whenever $q\geq 17$.
We begin by proving that $\mathcal{F}_{a,b,c}$ satisfies the conditions of the irreducibility criterion given in Lemma~\ref{SeBa1971}.

\bigskip

(i) First of all, one easily verifies that $\mathcal{F}_{a,b,c}$ has no affine singular points by setting the partial derivatives of $f(X,Y)$ evaluated at an affine point $(x,y)$ on $\mathcal{F}_{a,b,c}$ equal to zero.

\bigskip

(ii) Next we show that the ideal points $X_\infty(1,0,0)$ and $Y_\infty(0,1,0)$ of the projective closure $\mathcal{F}$ of $\mathcal{F}_{a,b,c}$ are ordinary singularities of multiplicity $2$.
The curve $\mathcal{F}$ is equal to the zero locus of $F$ where
\begin{eqnarray}\label{thecurveg}
F(X,Y,Z)=(c X^2Z^2-b Z^4)-\mu Y^2(a X^2-c Z^2).
\end{eqnarray}
It is straightforward to verify that the points $X_\infty$ and $Y_\infty$ are both double points of $\mathcal F$, and by B\'ezout's theorem, they are the only points of $\mathcal{F}$ on the line $Z=0$.

\par Since the line with equation $Z=0$ is not a tangent line of $\mathcal{F}$, the tangents through $X_\infty$ have an equation of the form $Y-eZ=0$.
Substituting the point $(x,e)$ into the equation of the affine curve, we obtain
\begin{eqnarray*}
 \nonumber
 f(x,e)&=& (c x^2-b)-\mu e^2(a x^2-c)=x^2(c-\mu a e^2)-b+\mu c e^2=0.
\end{eqnarray*}
If $c=\mu e^2a$ and $f(x,e)=0$, then $f(x,e)=-b a + c^2=0$, contradicting the fact that $R(a,b,c)$ does not belong to the conic $\C$. It follows that the tangents of $\mathcal{F}$ at $X_\infty$ are exactly the lines $Y=e$ for which $e^2= \mu^{-1} a^{-1}c$. Since there are $2$ distinct tangents at $X_\infty$ the point $X_\infty$ is an ordinary singularity. Similarly one verifies that also the point $Y_\infty$ is an ordinary singular point with multiplicity $2$.

\bigskip

(iii) From the definition of intersection multiplicity for $X_\infty(1,0,0)$ and $V(0,e,1)$, where $e^2= \mu^{-1} a^{-1}c$, we obtain that
\begin{eqnarray*}
\nonumber
h(t) &=& F((1,0,0)+t(0,e,1))=F(1,et,t)= t^4(-b+\mu c e^2).
\end{eqnarray*}
The multiplictiy of the root $t=0$, and therefore, the intersection multiplicity of the curve $\mathcal{F}$ and the tangent line $Y=e$, in which, $e^2= \mu^{-1} a^{-1}c$, is $4$.

\bigskip

(iv) Finally, it easily follows that $\mathcal F$ has no linear component through $X_\infty$, since a linear component would have to intersect the remaining component(s) of the curve in at least three singular points, counted with multiplicity. Since $X_\infty$ has multiplicity 2, this contradicts (i).

\bigskip

\par It follows from Lemma~\ref{SeBa1971} that  $\mathcal{F}$ is absolutely irreducible. Hence, we can use the Hasse-Weil theorem
\[|N-q+1|\leq 2g\sqrt{q}\;\;\Rightarrow\;\;q+1-2g\sqrt{q}\leq N \leq q+1+2g\sqrt{q}\] where $g$ denotes the genus of the curve $\mathcal{F}$. A projective irreducible curve with no affine singularities has genus \[g=\binom{d-1}{2}-\sum_{P\in Sing(\mathcal{F})}\binom{m_P}{2}\] where $d$ denotes the degree of the curve and $m_p$ denotes the multiplicity of the singular point $P$ of the projective curve. For the curve $\mathcal{F}$, we have $2$ singular points $X_\infty$ and $Y_\infty$ of multiplicities $2$. Therefore $g=1$.

\bigskip

\par Assume that $R(a,b,c)$ is $H$-free. Fix $\mu \in \triangle$. For each $x^2\in \square$, we have \[\frac{c x^2-b}{a x^2-c}=\mu y^2\;\mbox{for\;some}\;y\in \F.\]

We will show that this leads to a contradiction unless $q<17$, by further analysing the curve $\mathcal{F}=\mathcal{Z}(F)$ as defined in \ref{thecurveg}.

(i) Clearly $X_\infty$ and $Y_\infty$ are the only two points of the form $(x,y,0)$ on $\mathcal{F}=\mathcal{Z}(F)$.

(ii) A point of the form $(x,0,1)$ lies on the curve $\mathcal{F}$ if and only if $ x^2=c^{-1}b$ and the point of the form $(0,y,1)$ lies on the curve $\mathcal{F}$ if and only if $y^2=\mu^{-1}c^{-1}b$.
Therefore, the points of these forms can not lie on the curve $\mathcal{F}$ at the same time. It follows that the curve $\mathcal{F}$ has $2$ points of the form $(x,0,1)$ or $(0,y,1)$.

(iii) If the point $(x,y,1)$ lies on the curve $\mathcal{F}$ then
$\mu y^2(a x^2-c)=c x^2-b$.
By the hypothesis, for each $x^2\notin\{\frac{c}{a},\frac{b}{c}\}$ we obtain
two such points on $\mathcal{F}$.
If $x^2=\frac{c}{a}$, then $c^2-ba=0$, contradicting the fact that $R(a,b,c)$ is not on the conic $\mathcal{C}=\mathcal{Z}(XY-Z^2)$.
If $x^2\neq\frac{c}{a}$ but $x^2=\frac{b}{c}$ then $ y=0$ and this case is already accounted for in $(ii)$ above.
So excluding the previous cases $(i)$ and $(ii)$ we obtain at least $2(q-5)$ points on the curve $\mathcal{F}$ of the form $(x,y,1)$ with $x,y\neq 0$.

We conclude that the number of $\F$-rational points on the curve $\mathcal{F}$ is at least $2q-6$. The Hasse-Weil bound gives
$2q-6 \leq q+1+2\sqrt{q}$, which implies $q\leq 13$.
\end{proof}

\begin{remark} \label{remark} The fact that the curve $\mathcal{F}_{a,b,c}$ has an absolutely irreducible $\F$-rational component can also
be deduced from the fact that the curve $\mathcal{F}_{a,b,c}=\mathcal{Z}(f)$ where $f(X,Y)=(c X^2-b)-\mu Y^2(a X^2-c)$ is a Kummer extension of the curve $\mathcal{H} =\mathcal{Z}(h')$ where $h'(X) =(c X^2-b)-\mu u(a X^2-c)$ for $Y^2=u$.
\end{remark}

\subsection{} If $abc=0$, then we have the following cases.

\bigskip

\subsubsection{}\label{(i)} If $c=0$, then $ab\neq 0$ and $\varphi_R$ sends $x$ to $y=-(ba^{-1})x^{-1}$. If $ba^{-1}\in\square$, then the point $R(a,b,0)$ is $H$-free for $q\equiv 3\;\m$. Similarly, if $ba^{-1}\in\triangle$, then the point $R(a,b,0)$ is $H$-free for $q\equiv 1\;\m$.

\bigskip

\subsubsection{}\label{(ii)} If both $b$ and $a$ are $0$, then $\varphi_R$ sends $x\in \square$ to $\varphi_R(x)\in \square$ for $q\equiv 1\;\m$ and it sends $x\in \triangle$ to $\varphi_R(x)\in \square$ for $q\equiv 3\;\m$. Therefore, if $q\equiv1\;\m$, then $R(0,0,1)$ is $H$-covered, and if $q\equiv3\;\m$, then the point $R(0,0,1)$ is $H$-free.

\bigskip

\subsubsection{}\label{(iii)} Next consider the cases $b=0$, $ac\neq 0$ and $a=0$, $bc\neq 0$.

\begin{lemma}\label{Ex:cases} If $q\geq 17$, then the points $U(0,b,c)$ and $V(a,0,c)$ are $H$-covered for $bc\neq 0$ and $ac\neq0$.
\end{lemma}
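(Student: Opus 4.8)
The plan is to treat $U$ and $V$ directly, without going through the curve $\mathcal F_{a,b,c}$ and the irreducibility criterion of Lemma~\ref{SeBa1971}: in both cases the hypothesis $abc=0$ (with exactly one of $a,b$ zero and $c\neq 0$) — the configuration complementary to Lemma~\ref{involution} — makes the relevant object degenerate, so that the question reduces to finding a single point on a nonsingular conic. I will use that a parameter $\sigma\in\square$ corresponds to the point $P_\sigma=(1,\sigma^{2},\sigma)\in H$, and that $R$ is $H$-covered as soon as there is some $\sigma\in\square$ with $\varphi_R(\sigma)\in\square$ and $\varphi_R(\sigma)\neq\sigma$: for then the line $P_\sigma R$ meets $\C$ in the two distinct points $P_\sigma$ and $P_{\varphi_R(\sigma)}=\tau_R(P_\sigma)$, both of which lie in $H$, so it is a secant of $H$ through $R$.

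For $U(0,b,c)$ I would first compute $\varphi_U(s)=\frac{cs-b}{-c}=bc^{-1}-s$. Writing $\sigma=r^{2}$ and requiring $\varphi_U(\sigma)=bc^{-1}-r^{2}$ to be a square $t^{2}$, the task becomes to find a point $(r,t)$ on the affine conic $X^{2}+Y^{2}=bc^{-1}$ with $r^{2}\neq t^{2}$ (which guarantees $\varphi_U(\sigma)\neq\sigma$). Since $bc\neq 0$ this conic is nonsingular, so by Theorem~\ref{Weil1945} (with $d=2$) its projective closure has exactly $q+1$ points and the affine part has at least $q-1$ of them; only the at most four points with $r^{2}=t^{2}$ (those force $2r^{2}=bc^{-1}$) have to be excluded, so for $q\geq 17$ a good point remains and $U$ is $H$-covered.

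For $V(a,0,c)$ I would compute $\varphi_V(s)=\frac{cs}{as-c}$. Here the one extra ingredient is the identity
\[
\varphi_V(x^{2})=\frac{cx^{2}}{ax^{2}-c}=(acx^{2}-c^{2})\left(\frac{x}{ax^{2}-c}\right)^{2},
\]
valid whenever $x\neq 0$ and $ax^{2}\neq c$, which shows that $\varphi_V(x^{2})$ is a nonzero square precisely when $acx^{2}-c^{2}$ is. So I would pass to the affine conic $acX^{2}-Z^{2}=c^{2}$, nonsingular because $ac\neq 0$, which therefore has at least $q-1$ affine points by Theorem~\ref{Weil1945}. Discarding the at most two points with $x=0$, the at most two with $z=0$ (equivalently $ax^{2}=c$, where $\varphi_V(x^{2})=\infty$), and the at most four with $x^{2}=2c/a$ (the nonzero fixed point of $\varphi_V$, where $z=\pm c$) leaves at least $q-9$ points, which is positive for $q\geq 17$; each surviving point gives a nonzero square $\sigma=x^{2}$ with $\varphi_V(\sigma)\in\square$ and $\varphi_V(\sigma)\neq\sigma$, so $V$ is $H$-covered.

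I do not expect a genuine obstacle: the substance is just the reduction of each case to a single nonsingular conic (immediately for $U$, and after the square-factor identity above for $V$), after which Theorem~\ref{Weil1945} with $d=2$ is exact and supplies the roughly $q$ points that are needed. The only care required is the bookkeeping of the degenerate positions of the involution $\varphi_R$ — a parameter mapped to itself, to $\infty$, or (when relevant) to $0$ — but these amount to at most a handful of points on a conic carrying at least $q-1$ points, hence are harmless once $q\geq 17$.
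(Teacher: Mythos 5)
Your proof is correct, but it takes a genuinely different route from the paper, most visibly for $V(a,0,c)$. The paper handles $U(0,b,c)$ by listing the chords of $H$ and invoking the fact that a non-square is a sum of two nonzero squares; your count of affine points on the nonsingular conic $X^2+Y^2=bc^{-1}$ is essentially the same fact in geometric form, so the two arguments for $U$ are equivalent. For $V(a,0,c)$, however, the paper parametrizes the chords through $V$ by the quartic $\mathcal{Z}(X^2Y^2-\mu'(X^2+Y^2))$, proves its absolute irreducibility by excluding linear components and a conic--conic splitting (via the classification of pencils of conics), computes that the genus is $0$, and only then applies Theorem~\ref{Weil1945}. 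Your identity $\varphi_V(x^2)=(acx^2-c^2)\bigl(\tfrac{x}{ax^2-c}\bigr)^2$ replaces all of this by an exact point count on the nonsingular conic $acX^2-Z^2=c^2$, and your bookkeeping of the degenerate parameters (the fixed points $\sigma=0$ and $\sigma=2c/a$, and the pole $a\sigma=c$) is accurate; indeed you are more careful than the paper's covering criterion, which does not explicitly rule out the tangency case $\varphi_R(x)=x$. What your approach buys is a shorter, more elementary proof of this lemma (no appeal to Lemma~\ref{SeBa1971}, no genus computation) and a slightly better threshold, since $q-9>0$ already for $q\geq 11$; what it does not replace is the curve-theoretic machinery of Lemma~\ref{involution}, where the associated quartic has genus $1$ and no such reduction to a conic is available.
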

\begin{proof} Assume that the point $U(0,b,c)$ is $H$-free. Then it can not be on chords of $H$ which consist of the lines
 \[\ell_k\;:\;Y=\alpha^{2k}Z,\;\mbox{where}\;k=1,2,.., (q-1)/2\] which pass through the points $(1,0,0)$ and $(1,\alpha^{4k},\alpha^{2k})$, and \[\ell_{ij}\;:\;Y=(\alpha^{2i}+\alpha^{2j})Z-(\alpha^{2j})(\alpha^{2i})X,\;\mbox{where}\;i,j=1,2,.., (q-1)/2,\;i\neq j\] which pass through the points $(1,\alpha^{4i},\alpha^{2i})$ and $(1,\alpha^{4j},\alpha^{2j})$, where $\mathbb{F}_q^*=\langle\alpha\rangle$. Therefore, for the point $U(0,b,c)$, this implies \[b\neq \alpha^{2k}c\;\mbox{and}\;bc^{-1}\neq\alpha^{2i}+\alpha^{2j}\;\hbox{where}\;i,j,k=1,..,(q-1)/2.\]
This contradicts the fact that a non-square element $bc^{-1}$ in $\F$ can be written as the sum of two nonzero square elements. We conclude that the point $U(0,b,c)$ is $H$-covered.

\bigskip

\par From the above it also follows that the point $V(a,0,c)$ lies on the chord $\ell_{ij}$ if and only if \[(\alpha^{2i}+\alpha^{2j})c-(\alpha^{2j})(\alpha^{2i})a=0,\;\mbox{where}\;i,j=1,2,.., (q-1)/2,\;i\neq j.\]
For $\alpha^{2i}=X$, $\alpha^{2j}=Y$ and $a^{-1}c=\mu'$, we obtain the curve $\mathcal{Z}(f')$ with \[f'(X,Y)= X^2Y^2-\mu'(X^2+Y^2)=0\;\mbox{with}\;\mu'\neq 0.\]
Therefore the point $V(a,0,c)$ is $H$-covered if the curve $\mathcal{Z}(f')$ has points with coordinates in $\F$.
Straightforward computations show that the points $(0,0,1)$, $(0,1,0)$ and $(1,0,0)$ are the singular points of
the projective closure $\mathcal{F}=\mathcal{Z}(F')$ of $\mathcal{Z}(f')$, where
\[F'(X,Y,Z)=X^2Y^2-\mu' Z^2(X^2 + Y^2).\]
The multiplicity of the point $(1,0,0)$ on $\mathcal{F}$ is $2$, since the multiplicity of $t=0$ in $\mathcal{F}(1,t,0)= t^{2}$ is $2$. By symmetry, also the point $(0,1,0)$ has multiplicity $2$. Similarly, one verifies that the point $(0,0,1)$ has multiplicity $2$ on the curve $\mathcal{F}$.

\bigskip

\par Now we want to prove that the curve $\mathcal{F}$ is absolutely irreducible. Since the degree of the curve is $4$, the degrees of the irreducible factors of the polynomial defining the curve are either $1$ and $3$ or $2$ and $2$.

\bigskip

\par The curve $\mathcal{F}$ has no linear component $\mathcal{Z}(Y-dX)$ through $(0,0,1)$
since \[f'(x,dx)=d^2x^4-\mu'(x^2+d^2x^2)=x^2(d^2x^2-\mu'(1+d^2))\] is not zero for all $x\in \F$.
Similarly, since
\[f'(x,e)=x^2e^2-\mu'(x^2+e^2)=x^2(e^2-\mu')-\mu'e^2\]
is not zero for all $x\in \F$, $\mathcal{F}$ has no linear component of the form $\mathcal{Z}(Y-eZ)$.

By symmetry, the curve $\mathcal{F}$ has no component of the form $\mathcal{Z}(X-fZ)$, either. Since a linear component must pass through one of the three singular points, this shows that the curve $\mathcal{F}$ has no linear component.

\bigskip

\par If we assume that the curve $\mathcal{F}$ has two irreducible factors of degrees $2$, then there are $2$ polynomials $f_1\neq 0$ and $f_2\neq 0$ such that $\mathcal{F}=f_1f_2$ with $\mathrm{deg}(f_1)=2$ and $\mathrm{deg}(f_2)=2$. It follows from the classification of pencils of conics in $\PG$, $q$ odd, that the pencil $\mathcal{P}(\mathcal{Z}(f_1),\mathcal{Z}(f_2))$ is equivalent to the pencil of type $o_{13}$, i.e. $\mathcal{P}(2XY,Y^2-Z^2)$, corresponding to the second column of Table $5$ of \cite{MR4045559}, since this is the only pencil of conics with $3$ base points (which are $(1,0,0)$, $(0,1,1)$ and $(0,-1,1)$). After a suitable coordinate transformation, we may therefore assume that $\mathcal{Z}(f_1)$ corresponds to $C_1=\mathcal{Z}(2XY+Y^2-Z^2)$ and $\mathcal{Z}(f_2)$ corresponds to $C_2=\mathcal{Z}(2\gamma XY+Y^2-Z^2)$ for some $\gamma \in \F \setminus \{0,1\}$. Computing the tangent lines of these conics at the base points of the pencil, we find that $\mathcal{Z}(Y)$ is a common tangent line of the conics at $(1,0,0)$ and there are $2$ different tangent lines at each of the points $(0,1,1)$ and $(0,-1,1)$.

\bigskip

\par This contradicts the fact that the curve $\mathcal{F}$ has two distinct tangents at its singular points. For example, for the point $(0,0,1)$, by considering the affine curve $\mathcal{Z}(f')$, with
\[f'(X,Y)=X^2Y^2-\mu'(X^2+Y^2),\]
it follows that the curve $\mathcal{F}$ has $2$ distinct tangent lines at $(0,0,1)$
since $q$ is odd.

\bigskip

\par It follows that the curve $\mathcal{F}$ is absolutely irreducible with genus
$$g=\binom{3}{2}-\sum_{P\in Sing(\mathcal{F})}\binom{2}{2}=0.$$
In particular, $\mathcal F$ has $q+1$ $\F$-rational points. By the arguments used at the start of the proof this implies that the point $V(a,0,c)$ is $H$-covered.
\end{proof}

As we have seen in case \ref{(i)}, there is always a point with coordinates $(a,b,0)$ which is $H$-free. To get a complete arc, we now define the point $R_0(a_0,b_0,0)$
where $a_0b_0\in\square$ for $q\equiv 3\;\m$ and $a_0b_0\in\triangle$ for $q\equiv 1\;\m$. Consequently, the point $R_0$ is an internal point to $\C$. Let the arc $K$ be defined as $H\cup \{R_0\}$.

\begin{lemma}\label{Ex:lemma4} If $q\geq 17$ then each point $P'\in H'$ is $K$-covered.
\end{lemma}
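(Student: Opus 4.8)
The plan is to deduce the lemma from the single fact that $R_0$ was chosen to be $H$-free, which forces the involution $\tau_{R_0}$ to interchange $H$ and $H'$; after that the statement is immediate.

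First I would record that, since $R_0$ is an internal point of $\C$, it lies on no tangent line of $\C$, so $\tau_{R_0}$ is a fixed-point-free involution on the $q+1$ points of $\C$; its orbits are the $(q+1)/2$ pairs $\{P,\tau_{R_0}(P)\}$, one on each secant of $\C$ through $R_0$. Since $R_0$ is $H$-free (see~\ref{(i)}), no secant of $\C$ through $R_0$ has both of its points in $H$, so no $\tau_{R_0}$-orbit is contained in $H$, and hence every pair $\{P,\tau_{R_0}(P)\}$ meets $H'=\C\setminus H$. As there are $(q+1)/2$ orbits and $|H'|=(q+1)-|H|=(q+1)/2$, each orbit contains exactly one point of $H'$, and therefore --- using $H$-freeness once more --- exactly one point of $H$; thus $\tau_{R_0}$ maps $H'$ bijectively onto $H$.

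To conclude, I would fix $P'\in H'$ and set $P:=\tau_{R_0}(P')\in H$: the line $PP'$ is the secant of $\C$ carrying this orbit, so it passes through $R_0$; hence it is a line through the two distinct points $R_0$ and $P$ of $K=H\cup\{R_0\}$, that is, a secant of $K$, and it contains $P'$. Therefore $P'$ is $K$-covered. If one prefers to verify this coordinate-wise, it follows at once from $\varphi_{R_0}(s)=-(b_0a_0^{-1})s^{-1}$ and $\varphi_{R_0}(\infty)=0$: since $b_0a_0^{-1}\in\square$ when $q\equiv 3\;\m$ and $b_0a_0^{-1}\in\triangle$ when $q\equiv 1\;\m$, while $-1\in\triangle$ in the first case and $-1\in\square$ in the second, in both cases $\varphi_{R_0}$ carries $\triangle$ into $\square$, i.e. the parameter set of $H'$ into that of $H$.

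I do not expect a genuine obstacle here: the whole argument reduces to the orbit count for the fixed-point-free involution $\tau_{R_0}$. The one point that merits a word is the ideal point $(0,1,0)\in H'$, whose image under $\tau_{R_0}$ has parameter $0$ (the third coordinate of $R_0$ being zero), i.e. equals $(1,0,0)\in H$; the associated secant of $K$ is the line $Z=0$, which meets $K$ precisely in $R_0$ and $(1,0,0)$. I would also note in passing that this step uses no lower bound on $q$, so the hypothesis $q\ge 17$ is here only the standing assumption of the section.
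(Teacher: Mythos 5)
Your proof is correct and follows essentially the same route as the paper: since $R_0$ is $H$-free and internal, every secant of $\C$ through $R_0$ meets $\C$ in one point of $H$ and one of $H'$, so each $P'\in H'$ lies on a secant of $K=H\cup\{R_0\}$. Your write-up merely makes explicit the orbit/counting step (and the coordinate check for $(0,1,0)$) that the paper's two-line proof leaves implicit.
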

\begin{proof}
\par Each secant line through $R_0$ intersects the conic $\C$ in points $P\in H$ and $P'\in H'$, since the point $R_0$ is not $H$-covered. But the arc $K$ contains both $H$ and $R_0$, so the points $P'\in H'$ are $K$-covered.
\end{proof}
\begin{lemma}\label{Ex:lemma5} If $q\equiv 1 \;\m$, then the point $(0,0,1)$ is $H$-covered and if $q\equiv 3 \;\m$, then the point $(0,0,1)$ is $K$-covered, if $\frac{b_0}{a_0}\in \{u^4 : u\in \F\setminus \{0\}\}$.
\end{lemma}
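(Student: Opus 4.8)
The plan is to treat the two residue classes of $q$ modulo $4$ separately. For $q\equiv 1\,\m$ the claim will follow at once from the analysis of the case $a=b=0$, and for $q\equiv 3\,\m$ it will reduce to exhibiting a point of $H$ on the unique line joining $(0,0,1)$ to $R_0$.

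For $q\equiv 1\,\m$, I would note that for $R=(0,0,1)$ the associated matrix $M_R$ is $\mathrm{diag}(1,-1)$, so the projectivity is simply $\varphi_R(s)=-s$. Since $-1\in\square$ when $q\equiv 1\,\m$, the map $\varphi_R$ sends squares to squares, so by the characterisation of $H$-covered points proved above, $(0,0,1)$ is $H$-covered; explicitly it lies on the secant of $H$ through $(1,1,1)$ and $(1,1,-1)$.

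For $q\equiv 3\,\m$, case~\ref{(ii)} shows that $(0,0,1)$ is $H$-free, hence lies on no secant of $H$. Since $K=H\cup\{R_0\}$, the only candidate secant of $K$ through $(0,0,1)$ is the line $\ell$ joining $(0,0,1)$ and $R_0=(a_0,b_0,0)$, so it is enough to show that $\ell$ meets $H$. The points of $\ell$ other than $(0,0,1)$ have the form $(a_0,b_0,t)$ with $t\in\F$, and such a point lies in $H=\{(1,s^4,s^2):s\in\F\}$ exactly when $b_0/a_0$ is a fourth power in $\F$. Under the hypothesis $b_0/a_0=u^4$ with $u\neq 0$, the point $P_0=(a_0,b_0,a_0u^2)=a_0\cdot(1,u^4,u^2)$ lies in $\ell\cap H$; since $a_0b_0\neq 0$ and $u\neq 0$, the points $(0,0,1)$, $R_0$, $P_0$ are pairwise distinct and collinear, so $\ell$ is a genuine secant of $K$ through $(0,0,1)$, and $(0,0,1)$ is $K$-covered.

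I do not anticipate a genuine obstacle here: the only subtlety is that for $q\equiv 3\,\m$ there is exactly one line through $(0,0,1)$ and $R_0$, which is precisely why the fourth-power condition on $b_0/a_0$ is both necessary and sufficient to put a point of $H$ on it. (It is also worth recording that this condition forces $a_0b_0=(a_0u^2)^2\in\square$, in agreement with $R_0$ being the internal point fixed earlier.)
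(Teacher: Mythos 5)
Your proof is correct and follows essentially the same route as the paper: the $q\equiv 1\ \m$ case is exactly the observation from case~\ref{(ii)} that $\varphi_{(0,0,1)}(s)=-s$ preserves squares since $-1$ is a square, and the $q\equiv 3\ \m$ case is settled, as in the paper, by noting that the line through $(0,0,1)$ and $R_0$ meets $H$ precisely when $b_0a_0^{-1}$ is a nonzero fourth power, the point of $H$ being $(1,u^4,u^2)$. Your explicit secant $(1,1,1)$, $(1,1,-1)$ and the remark that the hypothesis forces $a_0b_0$ to be a square are accurate additional details consistent with the paper's setup.
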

\begin{proof} The proof of the first part of the lemma follows from \ref{(ii)}. For the second part of the lemma, we need to show that there is a chord of $K$ through the point $(0,0,1)$. The line passing through the points $(1,\alpha^{4k},\alpha^{2k})$ and $R_0$ contains $(0,0,1)$ if and only if $b_0a_0^{-1}=\alpha^{4k}$.
\end{proof}

\begin{theorem} If $q\geq 17$ and $R_0(a_0,b_0,0)$, with $a_0b_0\in\triangle$ for $q\equiv 1\;\m$ and $\frac{b_0}{a_0}=u^{4}$ for some $u \in \F \setminus \{0\}$ for $q\equiv 3\;\m$,
then the arc $K=H\cup \{R_0\}$ is complete.
\end{theorem}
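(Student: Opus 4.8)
The plan is to prove completeness by a single case analysis on the coordinates of an arbitrary point $P=(a,b,c)\in\PG\setminus K$, dispatching each case to one of the lemmas already established; since $H\subseteq K$, in most cases it is enough to show that $P$ is $H$-covered. Before this, I would record that $K$ is genuinely an arc: by the computation in Subsection~\ref{(i)}, the hypotheses on $R_0$ make it an $H$-free point (for $q\equiv 3\;\m$ because $b_0/a_0=u^4\in\square$, and for $q\equiv 1\;\m$ because $a_0b_0\in\triangle$ forces $b_0a_0^{-1}\in\triangle$), so no line meets $H\cup\{R_0\}$ in three points; the same hypotheses make $R_0$ internal to $\C$.

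For the case analysis, first suppose $P\in\C$: if $P\in H$ there is nothing to do, and if $P\in H'=\C\setminus H$ then $P$ is $K$-covered by Lemma~\ref{Ex:lemma4} (this is the first place $q\geq 17$ enters). Now suppose $P\notin\C$. If $c=0$, then since $\C\cap\mathcal{Z}(Z)=\{(1,0,0),(0,1,0)\}$ consists of points on $\C$, necessarily $ab\neq 0$, and $P$ lies on the line $\mathcal{Z}(Z)$, which joins $(1,0,0)\in H$ to $R_0$ --- two distinct points of $K$ since $b_0\neq 0$ --- so $\mathcal{Z}(Z)$ is a secant of $K$ and $P$ is $K$-covered. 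If $c\neq 0$ we normalise $c=1$: when $ab\neq 0$, Lemma~\ref{involution} gives that $P$ is $H$-covered; when exactly one of $a,b$ vanishes, $P$ is one of the points $U(0,b,1)$, $V(a,0,1)$ and is $H$-covered by Lemma~\ref{Ex:cases}; and the sole remaining point is $P=(0,0,1)$, handled by Lemma~\ref{Ex:lemma5}: it is $H$-covered when $q\equiv 1\;\m$, while for $q\equiv 3\;\m$ the hypothesis $b_0/a_0=u^4$ places $(0,0,1)$ on the secant of $K$ joining $R_0$ to $(1,u^4,u^2)\in H$. This exhausts $\PG\setminus K$, and completeness follows.

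Since the substantive work lies in the lemmas, the only delicate point in the assembly is the point $(0,0,1)$ when $q\equiv 3\;\m$: there it is $H$-free, so any secant of $K$ through it must use $R_0$, and the line through $R_0$ and $(1,\alpha^{4k},\alpha^{2k})\in H$ contains $(0,0,1)$ precisely when $b_0a_0^{-1}=\alpha^{4k}$, i.e.\ when $b_0/a_0$ is a fourth power --- exactly the stated hypothesis, and consistent with $R_0$ being $H$-free because for $q\equiv 3\;\m$ the nonzero fourth powers coincide with the nonzero squares. The rest is bookkeeping: verifying that the configurations left aside ($P=R_0$, $P\in\{(1,0,0),(0,1,0)\}$) lie in $K$ or were already covered, and that $q\geq 17$ is exactly the regime in which Lemmas~\ref{involution},~\ref{Ex:cases} and~\ref{Ex:lemma4} are valid.
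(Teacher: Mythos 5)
Your proposal is correct and follows essentially the same route as the paper: a case analysis on the coordinates of a point outside $K$, dispatched to Lemma~\ref{involution}, Lemma~\ref{Ex:cases}, Lemma~\ref{Ex:lemma4} and Lemma~\ref{Ex:lemma5}. In fact you are slightly more thorough than the paper's own proof, which leaves implicit both the verification that $R_0$ is $H$-free (so that $K$ is an arc) and the case $c=0$, $ab\neq 0$, where, as you note, the line $Z=0$ through $(1,0,0)\in H$ and $R_0$ is itself a secant of $K$ covering all remaining points of that line.
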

\begin{proof}
By Lemma~\ref{involution} the point $R(a,b,c)\notin \C$, where $abc\neq 0$, is $H$-covered. The points $U(0,b,c)$ and $V(a,0,c)$, where $bc \neq 0$, $ac\neq 0$ are $H$-covered by Lemma~\ref{Ex:cases}. The point $P'\in H'$ is $K$-covered by Lemma~\ref{Ex:lemma4}. If $q\equiv 1\;\m$, then the point $(0,0,1)$ is $H$-covered, and if $q\equiv 3\;\m$, then the point $(0,0,1)$ is $K$-covered by Lemma~\ref{Ex:lemma5}.
\end{proof}

\section{On Pellegrino's condition}

The complete arc $K=H\cup\{R\}$ of size $(q+3)/2$ contains $(q+1)/2$ points from the conic $\C$ and the internal $H$-free points are on the line $Z=0$. But the line $Z=0$ is not an external line to the conic $\C$. Hence, we have the following corollary.
\begin{corollary}\label{H-free} For $q\geq17$, if $q\equiv 1 \;\m$, then the only H-free points are internal points on the line $Z=0$ and if $q\equiv 3 \;\m$, then the H-free points are internal points on the line $Z=0$ and the point $(0,0,1)$.
\end{corollary}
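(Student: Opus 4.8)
The plan is to assemble the case analysis carried out in Section~3. Every point of $\PG$ not on $\C=\mathcal{Z}(XY-Z^2)$ can be written as $R(a,b,c)$ with $(a,b,c)\neq(0,0,0)$ and $ab\neq c^2$, and according to the vanishing pattern of $a,b,c$ it belongs to exactly one of the families: (1) $abc\neq 0$; (2) $c=0$ and $ab\neq 0$; (3) $a=0$ and $bc\neq 0$; (4) $b=0$ and $ac\neq 0$; (5) $a=b=0$, i.e.\ the single point $(0,0,1)$ (the two-zero patterns giving $(1,0,0)$ or $(0,1,0)$ are excluded, since those points lie on $\C$). First I would record that for $q\geq 17$ every point in families (1), (3) and (4) is $H$-covered: this is Lemma~\ref{involution} for (1) and Lemma~\ref{Ex:cases} for (3) and (4). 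Hence none of these three families contributes an $H$-free point.

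It remains to analyse families (5) and (2). For family (5), case~\ref{(ii)} gives that $(0,0,1)$ is $H$-covered when $q\equiv 1\;\m$ and $H$-free when $q\equiv 3\;\m$; note that $(0,0,1)$ does not lie on the line $Z=0$ and is an \emph{external} point of $\C$, since it lies on the tangents $\mathcal{Z}(X)$ and $\mathcal{Z}(Y)$. For family (2), case~\ref{(i)} states that $R(a,b,0)$ with $ab\neq 0$ is $H$-free exactly when $ba^{-1}\in\square$ and $q\equiv 3\;\m$, or $ba^{-1}\in\triangle$ and $q\equiv 1\;\m$. I would then compare this with the internal/external dichotomy via the first lemma of Section~3 applied to the defining polynomial $f=XY-Z^2$ of $\C$: since $(0,0,1)$ is external and $f(0,0,1)=-1$, external points are exactly the points off $\C$ at which $f$ has the quadratic character of $-1$, so $R(a,b,0)$ is internal precisely when $f(R)=ab$ has the character opposite to $-1$; as $ab$ and $ba^{-1}$ have the same character, a one-line check for $q\equiv 1\;\m$ and for $q\equiv 3\;\m$ shows that this is exactly the $H$-freeness condition above. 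Thus on the line $Z=0$ the $H$-free points are precisely the internal ones.

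Collecting the families, the $H$-free points of $\PG\setminus\C$ are exactly the internal points of the line $Z=0$ when $q\equiv 1\;\m$, and these together with $(0,0,1)$ when $q\equiv 3\;\m$, which is the assertion. I would finish with a short remark clarifying the scope of the term ``$H$-free'': the points of $H'=\C\setminus H$ are $H$-free in the literal sense, because a secant of $H$ meets $\C$ only in its two points of $H$, but these points lie on the conic and become $K$-covered as soon as $R_0$ is adjoined (Lemma~\ref{Ex:lemma4}), so the corollary concerns the $H$-free points lying off $\C$. The only delicate point is the bookkeeping in family (2): one has to be sure that ``$H$-free on $Z=0$'' and ``internal on $Z=0$'' describe the same set for both $q\equiv 1\;\m$ and $q\equiv 3\;\m$, and this is precisely where the internal/external lemma and the value $f(0,0,1)=-1$ enter; everything else is a direct appeal to results already established.
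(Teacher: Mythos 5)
Your proposal is correct and follows essentially the same route as the paper: the corollary is there stated as a direct consequence of the Section~3 case analysis (Lemma~\ref{involution}, Lemma~\ref{Ex:cases} and the cases \ref{(i)}, \ref{(ii)}), which is exactly what you assemble. Your explicit check, via the internal/external lemma with $f(0,0,1)=-1$, that ``$H$-free on $Z=0$'' coincides with ``internal on $Z=0$'' in both congruence classes, and your remark that the points of $H'$ are literally $H$-free but lie on $\C$, merely make explicit what the paper leaves implicit, so there is nothing to object to.
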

\begin{remark}
Pellegrino proved that if $H$ is a set of $(q+1)/2$ points on a conic $\C$, satisfying the condition that there exists an external line $\ell$ to $\C$ containing two internal $H$-free points $P_1$, $P_2$, then for $q>13$:
\begin{enumerate}[label=(\roman*)]
\item when $q\equiv 3\;\m$ , the arc $K=H\cup\{P_1,P_2\}$ is complete;
\item when $q\equiv 1\;\m$ , the arcs $K=H\cup \{P_1,P_2\}$ and $K'=H\cup\{P\}$ are complete where $P$ is the pole of $\ell$ with respect to a conic $\C$.
\end{enumerate}
However, Corollory~\ref{H-free} shows that Pellegrino's condition from \cite{Pellegrino1993}, ``precisely, that
there exists an external line containing two internal $H$-free points" is not always satisfied.  This gives a counterexample to the claim in \cite{Hirschfeld_review}.
\end{remark}

\section{Final comments}
Using the same construction, for small values of $q$ computations using the GAP~\cite{GAP}-package and FinInG~\cite{fining} show that if $q=9$,
then we should add $3$ internal $H$-free points to the set $H$ to obtain a complete arc $K$ from the set $H$. Then $|K|=(q+7)/2=8$, and it is known that there is a unique such complete arc in $\mathrm{PG}(2,9)$. If $q=11$ or $13$, then we should add $2$ internal $H$-free points to the set $H$ giving the complete arcs of size either $8$ or $9$.
\bigskip
\par In summary, if the point $R$ is an external point of $\C$ and roughly half of the points of the non-degenerate conic is taken, then there are complete arcs of sizes at least $(q+5)/2$ and at most $(q+11)/2$, and if the point $R$ is an internal point of $\C$, then there are complete arcs of sizes at least $(q+3)/2$ and at most $(q+5)/2$.

\section{Acknowledgements}
The authors would like to thank Tam\'as Sz\H{o}nyi and G\'abor Korchm\'aros for their valuable discussions and contributions to the paper. We thank the anonymous reviewers for their thoughtful comments and suggestions, and for pointing out Remark~\ref{remark}.

\bibliographystyle{amsplain}

\end{document}